\documentclass[12pt,a4paper]{article}
\usepackage{amsmath,amssymb,amsthm,fullpage}
\newtheorem{theorem}{Theorem}
\newtheorem{lemma}{Lemma}
\newtheorem{remark}{Remark}
\usepackage{mathptmx}
\DeclareMathOperator{\C}{\textit{C}\,}
\DeclareMathOperator{\K}{\textit{K}\,}
\DeclareMathOperator{\CT}{\textit{CT}\,}
\begin{document}
\title{Game arguments in computability theory and algorithmic information theory}
\author{Andrej Muchnik\thanks{1958--2007} \and Alexander Shen\thanks{LIRMM, CNRS \& Universit\'e Montpellier 2, on leave from IITP RAS, Moscow.\hfil\break \texttt{alexander.shen@lirmm.fr, sasha.shen@gmail.com}. Supported in part by NAFIT EMER-008-01 grant and RFBR 09-01-00709-a grant} 
\and Mikhail Vyugin}

\maketitle

\begin{abstract}
\sloppy
We provide some examples showing how game-theoretic arguments (the approach that goes back to Lachlan and was developed by An.~Muchnik) can be used in computability theory and algorithmic information theory. To illustrate this technique, we start with a proof of a classical result, the unique numbering theorem of Friedberg, translated to the game language. Then we provide game-theoretic proofs for three other results: (1)~the gap between conditional complexity and total conditional complexity; (2)~Epstein--Levin theorem relating a priori and prefix complexity for a stochastic set (for which we provide a new game-theoretic proof) and (3)~some result about information distances in algorithmic information theory (obtained by two of the authors [A.M. and M.V.] several years ago but not yet published). An extended abstract of this paper appeared in~\cite{cie}.

\end{abstract}

\bigskip
It often happens that some result in computability theory or algorithmic information theory is essentially about the existence of a winning strategy in some game. This approach was considered by A.~Lachlan for enumerable sets\footnote{As Lachlan writes in~\cite{lachlan}, ``our reason for studying basic games [the kind of games he defined] is that every theorem of $T(\mathcal{R})$ [elementary theory of enumerable sets] known at the present time can be proved by constructing an effective winning strategy for a suitable basic game.''}; later it was (in different forms) used by An.A.~Muchnik~\cite{muchnik-general,muchnik-game,muchnik-ver}. In Section~\ref{friedberg} we illustrate this approach by showing how a classical result of recursion theory (Friedberg's theorem on unique numberings) can be translated into this language. In Section~\ref{total-conditional} we use game approach to relate total conditional complexity $\CT(x|y)$ (the minimal complexity of a \emph{total} program that maps a condition $y$ to some object $x$) and standard conditional complexity (where the program is not necessarily total). Then in Section~\ref{epstein-levin} we provide a new game-theoretic proof of a recent result of Epstein and Levin~\cite{epstein-levin}. Finally, in Section~\ref{sec:mv} we generalize the result of~\cite{vyugin} and show that for every natural numbers $m,n$ and for every string $x_0$ of sufficiently high complexity one can find strings $x_1,\ldots,x_m$ such that all the conditional complexities $\C(x_i|x_j)$ (for all $i,j$ in $\{0,1,2,\ldots,m\}$ such that $i\ne j$; note that $0$ is allowed) are equal to $n+O(1)$ where the constant in $O(1)$ depends only on $m$ (but not on $n$).

\section{Friedberg's unique numbering}
\label{friedberg}

Our first example is a classical result of R.~Friedberg~\cite{friedberg}: the existence of unique numberings.

\begin{theorem}[Friedberg]\label{th:friedberg}
There exists a partial computable function $F(\cdot,\cdot)$ of two natural variables such that:

\textup{(1)}~$F$ is universal, i.e., every computable function $f(\cdot)$ of one variable appears among the functions $F_n:x\mapsto F(n,x)$;

\textup{(2)}~all the functions $F_n$ are different.
\end{theorem}

\begin{proof}
   The proof can be decomposed in two parts. First, we describe some game and explain why the existence of a (computable) winning strategy for one of the players makes the statement of Friedberg's theorem true. In the second part we construct a winning strategy and therefore finish the proof.

\subsection{Game}

The game is infinite and is played on two boards. Each board is a table with an infinite number of columns (numbered $0,1,2\ldots$ from left to right) and rows (numbered $0,1,2,\ldots$ starting from the top). Each player (we call them \textbf{A}lice and \textbf{B}ob, as usual) plays on its own board. The players alternate. At each move player can fill finitely many cells at her/his choice with any natural numbers (s)he wishes. Once a cell is filled, it keeps this number forever (it cannot be erased).

The game is infinite, so in the limit we have two tables $A$ (filled by Alice) and $B$ (filled by Bob). Some cells in the limit tables may remain empty; other contain natural numbers (one in each cell). The winner is determined by the following rule: Bob wins if
\begin{itemize}
\item for each row in $A$-table there exists an identical row in $B$-table;
\item all the rows in $B$-table are different.
\end{itemize}
\begin{lemma}\label{winning-enough}
    Assume that Bob has a computable winning strategy in this game. Then the statement of Theorem~\ref{th:friedberg} is true.
\end{lemma}

\begin{proof}
    A table represents a partial function of two arguments in a natural way: the number in $i$th row and $j$th column is the value of the function on $(i,j)$; if the cell is not filled, the value is undefined.

    Let Alice fill $A$-table with the values of some universal function (so the $j$th cell in the $i$th row is the output of $i$th program on input $j$). Alice does this at her own pace simulating in parallel all the programs (and ignoring Bob's moves). Let Bob apply his computable winning strategy against the described strategy of Alice. Then his table also corresponds to some computable function $B$ (since the entire process is algorithmic). This function satisfies both requirements of Theorem~\ref{th:friedberg}: since $A$-function is universal, every computable function appears in some row of $A$-table and therefore (due to the winning condition) also in some row of $B$-table. So $B$ is universal. On the other hand, all $B_n$ are different since the rows of $B$-table (containing $B_n$) are different.
\end{proof}

\begin{remark}
     If Alice had a computable winning strategy in our game, the statement of Theorem~\ref{th:friedberg} would be false. Indeed, let Bob fill his table with the values of a universal function that satisfies the requirements of the theorem \textup(ignoring Alice's moves\textup). Then Alice fills her table in a computable way and wins. This means that some row of Alice's table does not appear in Bob's table \textup(so his function is not universal\textup) or two rows in Bob's table coincide \textup(so his function does not satisfy the uniqueness requirement\textup).

So we can try the game approach even not knowing for sure who wins in the game; finding out who wins in the game would tell us whether the statement of the theorem is true or false \textup(assuming that the winning strategy is computable\textup).
\end{remark}

\subsection{Winning strategy}

\begin{lemma}\label{winning}
Bob has a computable winning strategy in the game described.
\end{lemma}

Proving this lemma we may completely forget about computability and just describe the winning strategy explicitly  (this is the main advantage of the game approach). We do this in two steps: first we consider a simplified version of the game and explain how Bob can win in this simplified version. Then we explain what he should do in the full version of the game.

In the simplified version of the game Bob, except for filling $B$-table, may \emph{kill} some rows in it. The rows that were killed are not taken into account when the winner is determined. So Bob wins if the final (limit) contents of the tables satisfies two requirements: (1)~for each row in $A$-table there exists an identical valid (non-killed) row in $B$-table, and (2)~all the valid rows in $B$-table are different. (According to this definition, after the row is killed its content does not matter.)

To win the game, Bob hires a countable number of assistants and makes $i$th assistant responsible for $i$th row in $A$-table. The assistants start their work one by one; let us agree that $i$th assistant starts working at move $i$, so at every moment only finitely many assistants are active.  Assistant starts her work by \emph{reserving} some row in $B$-table not reserved by other assistants, and then continues by copying the current contents of $i$th row of $A$-table (for which she is responsible) into this reserved row. Also at some point the assistant may decide to kill the current row reserved by her, reserve a new row,  and start copying the current content  of $i$th row into the new reserved row. Later in the game she may kill the reserved row again, etc.

The instructions for the assistant determine when to kill the reserved row. They should guarantee that
\begin{itemize}
\item if $i$th row in the final (limit) state of $A$-table coincides with some previous row, then $i$th assistant kills her reserved row infinitely many times (so none of her reserved rows remain active);
\item if it is not the case, i.e., if $i$th row is different from all previous rows in the final $A$-table, then $i$th assistant kills her row only finitely many times (and after that faithfully copies $i$th row of $A$-table into that row).
\end{itemize}
If this is arranged, the valid rows of $B$-table correspond to the first occurences of rows with given contents in $A$-table, so they are all different, and contain all the rows of $A$-table.

The instruction for $i$th assistant: \emph{keep track of the number of rows that you have already killed in some counter $k$; if in the current state of $A$-table the first $k$ positions in $i$-th row are identical to the first $k$ positions of some previous row, kill the current reserved row in $B$-table \textup(and increase the counter\textup); if not, continue copying $i$-th row into the current row.}

Let us see why these instructions indeed have the required properties. Imagine that in the limit state of $A$-table the row $i$ is the first row with given content, i.e., is different from all the previous rows. For each of the previous rows let us select and fix some position (column) where the rows differ, and consider the moment $T$ when these positions reach their final states. Let $N$ be the maximum of the selected columns (in all previous rows). After step $T$ the $i$th row in $A$-table differs from all previous rows in one of the first $N$ positions, so if the counter of killed rows exceeds $N$, no more killings are possible (for this assistant).

On the other hand, assume that $i$th assistant kills her row finitely many times and $N$ is the maximal value of her counter. After $N$ is reached, the contents of $i$th row in $A$-table is always different from the previous rows in one of the first $N$ positions, and the same is true in the limit (since this rectangle reaches its limit state at some moment).

\smallskip
So Bob can win in the simplified game, and to finish the proof of Lemma~\ref{winning} we need to explain how Bob can refrain from killing and still win the game.

Let us say that a row is \emph{odd} if it contains a finite odd number of non-empty cells. Bob will now ignore odd rows of $A$-table and at the same time guarantee that all possible odd rows (there are countably many possibilities) appear in $B$-table exactly once. We may assume now without loss of generality that odd rows never appear in $A$-table: if Alice adds some element in a row making this row odd, this element is ignored by Bob until Alice wants to add another element in this row, and then the pair is added. This makes the $A$-table that Bob sees slightly different from what Alice actually does, but all the rows in the limit $A$-table that are not odd (i.e., are infinite or have even number of filled cells) will get through~--- and Bob separately takes care of odd rows.

Now the instructions for assistants change: instead of killing some row, she should fill some cells in this row making it odd, and ensure that this odd row is new (different from all other odd rows of the current $B$-table). After that, this row is considered like if it were killed (no more changes). This guarantees that all non-odd rows of $A$-table appear in $B$-table exactly once.

Also Bob hires an additional assistant who ensures that all possible odd rows appear in $B$-table: she looks at all the possibilities one by one; if some odd row has not appeared yet, she reserves some row and puts the desired content there. (Unlike other assistants, she reserves more and more rows.) This behavior guarantees that all possible odd rows appear in $B$-table exactly once. (Recall that other assistants also avoid repetitions among odd rows.) Lemma~\ref{winning} and Theorem~\ref{th:friedberg} are proven.
\end{proof}

\begin{remark}
Martin Kummer in his note~\textup{\cite{kummer-friedberg}} observes that the property ``$i$-th enumerable set is different from all preceding ones'' is $\mathbf{0}'$-enumerable and therefore the set of minimal indices can be represented as the range of a limit-computable function. This remark can be used instead of explicit construction, though it is less adapted to the game version.
\end{remark}

\section{Total conditional complexity}
\label{total-conditional}

In this section we switch from the general computability theory to the algorithmic information theory and compare the conditional complexity $\C(x|y)$ and the minimal length of the program of a \emph{total} function that maps $y$ to $x$. The latter quantity may be called ``total conditional complexity'' (see, e.g.,~\cite{bauwens}.) It turns out that total conditional complexity $\CT(x|y)$ can be much bigger than $\C(x|y)$.  But let us recall first the definitions.

The \emph{conditional complexity} of a binary string $x$ relative to a binary string $y$ (a \emph{condition}) is defined as the length of the shortest program that maps $y$ to $x$. The definition depends on the choice of the programming language, and one should select an optimal one that makes the complexity minimal (up to $O(1)$ additive term). When the condition $y$ is empty, we get (unconditional plain) complexity of $x$. See, e.g., \cite{shen-uppsala} for more details. The conditional complexity of $x$ relative to $y$ is denoted by $\C(x|y)$; the unconditional complexity of $x$ is denoted by $\C(x)$.

It is easy to see that $\C(x|y)$ can also be defined as the minimal \emph{complexity} of a program that maps $y$ to $x$. (This definition coincides with the previous one up to $O(1)$ additive term; any programming language that allows effective translations from other programming languages can be used.) But in some applications (e.g., in algorithmic statistics, see~\cite{stat}) we are interested in \emph{total} programs, i.e. programs that terminate on every input. Let us define $\CT(x|y)$ as the minimal complexity of a \emph{total} program that maps $y$ to $x$. In general, this restriction could increase complexity, but how significant could be this increase? It turns out that these two quantities may differ drastically, as the following simple theorem shows (this observation was made by several people independently; the first publication is probably~\cite[Section 6.1]{bauwens}).

\begin{theorem}
     \label{th:total}
For every $n$ there exist two strings $x_n$ and $y_n$ of length $n$ such that $\C(x_n|y_n)=O(1)$ but $\CT(x_n|y_n)\ge n$.
\end{theorem}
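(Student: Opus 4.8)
The plan is to exhibit a single fixed partial decoder together with a cleverly chosen condition $y_n$. The upper bound $\C(x_n|y_n)=O(1)$ will come for free, because $x_n$ will be the output of this one fixed program on input $y_n$; the entire content is in choosing $y_n$ so that the lower bound $\CT(x_n|y_n)\ge n$ holds. For the lower bound I would use a counting/diagonalization idea: there are fewer than $2^n$ objects of complexity below $n$, hence fewer than $2^n$ total programs $q$ with $\C(q)<n$, and for a fixed condition $y_n$ the outputs $q(y_n)$ of all such $q$ form a set of size less than $2^n$. Thus among the $2^n$ strings of length $n$ at least one is missed, and if $x_n$ is chosen to be missed then no total program of complexity $<n$ sends $y_n$ to $x_n$, i.e. $\CT(x_n|y_n)\ge n$. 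Note that this single avoidance condition already forces $\C(x_n)\ge n$ (it rules out the constant programs ``print $s$'' for every $s$ with $\C(s)<n$) and forces $x_n\ne y_n$ (it rules out the identity), so I need not impose incompressibility separately.

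First I would design the decoder. Every total program of complexity $<n$ corresponds to a description $p$ of length $<n$ for which the universal machine $V(p,\cdot)$ is total; in particular $V(p,y_n)$ halts. So rather than trying to recognise totality (which is $\Pi_2$ and hopeless), I over-approximate: the decoder dovetails $V(p,y_n)$ over all $p$ with $|p|<n$, collects the outputs of those computations that halt, and finally prints the first string of length $n$ not among them. This collected set is a superset of $\{q(y_n):q\text{ total},\ \C(q)<n\}$ and still has fewer than $2^n$ elements, so the printed $x_n$ is well defined and automatically satisfies $\CT(x_n|y_n)\ge n$. The decoder is a fixed $O(1)$ program, which is exactly what yields $\C(x_n|y_n)=O(1)$~--- provided it can be made to halt.

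The hard part, and the reason the choice of $y_n$ is delicate, is the stopping rule: a dovetailing enumeration of halting computations never knows when it is complete, and the cheap limit-computable fix (output the current first-missed string and let it stabilise) yields only $\C(x_n|y_n)=O(\log(\#\text{mind-changes}))$, which can be as large as $n$ rather than $O(1)$. To obtain a genuine $O(1)$ the decoder must be told the exact number $c=|\{p:|p|<n,\ V(p,y_n)\downarrow\}|$ of relevant halting computations, after which it stops at the $c$-th halt. Since $c<2^n$, this number fits into $n$ bits, so the natural idea is to let $y_n$ itself encode $c$. This closes the loop but creates a self-reference: $c$ is the halting count on input $y_n$, while $y_n$ is supposed to display $c$. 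I expect this fixed point to be the main obstacle: one must produce a length-$n$ string whose stored value equals the number of length-$<n$ programs that halt on it. I would resolve it with a recursion-theorem/fixed-point argument~--- building $y_n$ and the decoder together so that, by construction, the relevant halting count on $y_n$ agrees with the value read out of $y_n$~--- and this self-referential construction, rather than any of the counting, is where the real work of the proof lies.
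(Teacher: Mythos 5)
You have correctly isolated the crux (a dovetailing decoder cannot know when its enumeration of fewer-than-$2^n$ halting computations is complete, and a limit-computable fix costs up to $n$ bits), but the proposed resolution does not go through. What you need is a string $y$ of length $n$ encoding a number $d$ such that $d=f(d)$, where $f(d)$ is the number of descriptions $p$ with $|p|<n$ that halt on the string encoding $d$. This $f$ is just some self-map of a $2^n$-element set, with no monotonicity or continuity structure, and such maps need not have fixed points at all (think of $d\mapsto d+1 \bmod 2^n$); nothing about the optimal machine rules this out. The recursion theorem is not the right tool here: it produces a program whose \emph{behaviour} coincides with that of a transformed version of itself, whereas you need a \emph{numerical} identity between the content of $y$ and a halting count of \emph{other} programs on input $y$ --- a quantity that is not even computable from $y$. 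There is also no slack to absorb the mismatch, since the count can be as large as $2^n-1$ and $y$ has only $n$ bits. So the self-reference you flag as ``the main obstacle'' is, as far as I can see, not resolvable in the form you propose.

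The paper escapes precisely this trap by refusing to fix $y_n$ in advance: it diagonalizes over the condition as well. Alice declares a tentative pair, setting $A(y)=x$; whenever some enumerated total function of complexity below $n$ happens to map the current $y$ to the current $x$, she abandons that $y$, picks a fresh one, and chooses a value avoiding all functions enumerated so far (possible since there are fewer than $2^n$ of them). Because each abandonment is charged to a newly appeared function, there are fewer than $2^n$ abandonments, so the process stabilizes on a final pair $(x_n,y_n)$. The decoder for $x_n$ given $y_n$ never needs a completeness certificate: it simulates the whole (computable) interaction and outputs the value assigned to $y_n$, which is assigned at most once and never revised. If you want to salvage your write-up, replace the fixed-point step by this ``move to a new $y$ when caught'' diagonalization; the counting you already have ($<2^n$ total functions of complexity $<n$, $2^n$ available values and conditions) is exactly what makes it work.
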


\begin{proof}
    To prove this theorem, consider a game $G_n$ (for each $n$). In this game Alice constructs a partial function $A$ from $\mathbb{B}^n$ to $\mathbb{B}^n$, i.e., a function defined on (some) $n$-bit strings, whose values are also $n$-bit strings. Bob constructs a list $B_1,\ldots, B_k$ of total functions of type $\mathbb{B}^n\to\mathbb{B}^n$. (Here $\mathbb{B}=\{0,1\}$.)

The players alternate; at each move Alice can add several strings to the domain of $A$ and choose some values for $A$ on these strings;  the existing values cannot be changed. Bob can add some total functions to the list, but the total length of the list should remain less than $2^n$. The players can also leave their data unchanged; the game, though infinite by definition, is essentially finite since only finite number of nontrivial moves is possible. The winner is determined as follows: Alice wins if in the limit state there exists a $n$-bit string $y$ such that $A(y)$ is defined and is different from all $B_1(y),\ldots,B_k(y)$.

\begin{lemma}
   \label{lemma:CT-C}
   Alice has a computable \textup(uniformly in $n$\textup) winning strategy in this game.\footnote{Since the game is effectively finite, in fact the existence of a winning strategy implies the existence of a computable one. But it is easy to describe the computable strategy explicitly.}
\end{lemma}

Before proving this lemma, let us explain why it proves Theorem~\ref{th:total}. Let (for every~$n$) Alice play against the following strategy of Bob: he just enumerates all the total functions of type $\mathbb{B}^n\to\mathbb{B}^n$ that have complexity less than $n$, and adds them to the list when they appear. (As in the previous section, Bob does not really care about Alice's moves.) The behavior of Alice is then also computable since she plays a computable strategy againt a computable opponent. Let $y_n$ be the string where Alice wins, and let $x_n$ be equal to $A(y_n)$ where $A$ is the function constructed by Alice.

It is easy to see that $\C(x_n|y_n)=O(1)$; indeed, knowing $y_n$, we know $n$, can simulate the game, and find $x_n$ during this simulation. On the other hand, if there were a total function of complexity less than $n$ that maps $y_n$ to $x_n$, then this function would be in the list and Bob would win.

So it remains to prove the lemma by showing the strategy for Alice. This strategy is straightforward: first Alice selects some $y$ and says that $A(y)$ is equal to some $x$. (This choice can be done in arbitrary way, if Bob has not selected any functions yet; we may always assume it is the case by postponing the first move of Bob; the timing is not important in this game.) Then Alice waits until one of Bob's functions maps $y$ to $x$. This may never happen; in this case Alice does nothing else and wins with $x$ and $y$. But if this happens, Alice selects another $y$ and chooses $x$ that is different from $B_1(y),\ldots,B_k(y)$ for all total functions $B_1,\ldots,B_k$ that are currently in Bob's list. Since there are less than $2^n$ total functions in the list, it is always possible. Also, since Bob can make at most $2^{n}-1$ nontrivial moves, Alice will not run out of strings $y$. Lemma~\ref{lemma:CT-C} and theorem~\ref{th:total} are proven.
\end{proof}

A well-known result of Bennett, G\'acs, Li, Vit\'anyi and Zurek~\cite{bglvz} says that if $C(x|y)$ and $C(y|x)$ are small (do not exceed some $k$), there exists a program of complexity at most $k+O(\log k)$ that maps $x$ to $y$ and at the same time maps $y$ to $x$ (given an additional advice bit that says which of these two tasks it should perform). The natural question arises: is a similar statement true for total conditional complexities and computable bijections? The (partly negative) answer is provided by the following theorem (a sketch of its proof is given in~\cite{muchnik-game}, but some important details are missing there):

\begin{theorem}
Let $x$ and $y$ be two binary strings of length at most $n$. Then there exists a program $t$ that computes a permutation of the set of all binary strings,  maps $x$ to $y$ and  
    $$
\C(t)\le \CT(x|y)+\CT(y|x)+O(\log n).    
    $$
This bound cannot be improved significantly: for every $k$ and $n$ such that $n>2k$ there exist two strings $x$ and $y$ of length $n$ such that $\C(x),\C(y)\le k+O(\log n)$ but any program for a bijection that maps $x$ to $y$ has complexity at least~$2k-O(1)$.
\end{theorem}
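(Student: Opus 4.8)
The plan is to prove the two halves separately: the upper bound rests on a short combinatorial observation about the two total programs, while the lower bound is a counting argument whose delicate point is controlling $\C(x)$ and $\C(y)$ \emph{at the same time}.

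For the upper bound, fix total programs $p,q$ realizing $\CT(x|y)$ and $\CT(y|x)$, that is, computing total functions $f,g$ with $f(y)=x$ and $g(x)=y$. The key remark is that $M=\{(u,g(u)):f(g(u))=u\}$ is a \emph{partial injection}. Indeed each $u$ has at most one partner (namely $g(u)$), and if $g(u)=g(u')=v$ together with $f(v)=u$ and $f(v)=u'$ then $u=u'$, so no right endpoint is used twice. Because $f,g$ are total, $M$ is decidable uniformly in $p,q$; and $(x,y)\in M$, since $g(x)=y$ and $f(g(x))=f(y)=x$. Thus $M$ already pairs $x$ with $y$ \emph{without our having to find $x$ or $y$ explicitly}, which is exactly what allows the program for the permutation to be assembled from $p$ and $q$ alone.

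It then remains to extend the decidable partial injection $M$ to a permutation $t$ of all binary strings; any such extension automatically satisfies $t(x)=y$. Here one uses the standard back-and-forth construction: process all strings in length order, and alternately give each still-unmatched argument the least unused value and each still-unmatched value the least unused argument, always respecting $M$. Since $\{0,1\}^{*}$ is countable and $M$ is a partial injection, this yields a total bijection extending $M$, and it is computable because $M$ is decidable. To describe $t$ we therefore only need $p$ and $q$ together with a self-delimiting marker separating them; as $\CT(x|y),\CT(y|x)\le n+O(1)$ this marker and the length $n$ cost $O(\log n)$, giving $\C(t)\le\CT(x|y)+\CT(y|x)+O(\log n)$.

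For the lower bound I would count. There are fewer than $2^{2k}$ programs of complexity below $2k$, hence fewer than $2^{2k}$ bijections of that complexity, so for a fixed $x$ the reachable set $R_{x}=\{\pi(x):\C(\pi)<2k\}$ has size below $2^{2k}<2^{n}$; most length-$n$ strings $y$ are unreachable from $x$ by any cheap bijection. The target is one pair $(x,y)$ of length $n$ with $\C(x),\C(y)\le k+O(\log n)$ and $y\notin R_{x}$. The main obstacle --- and, I suspect, the source of the ``missing details'' of the sketch in~\cite{muchnik-game} --- is the \emph{simultaneous} complexity bound on both strings. A naive attempt takes $x$ incompressible at level $k$ and lets $y$ be a simplest length-$n$ string outside $R_{x}$; but $R_{x}$ is only enumerable and of size up to $2^{2k}$, so selecting a point outside it appears to cost about $\log|R_{x}|\approx 2k$ bits and pushes $\C(y)$ up to roughly $3k$, and the symmetric choice fares no better. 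Counting over the mere $\approx 2^{k}$ strings of complexity at most $k$ also fails, because a single cheap bijection can connect many of them and the transposition of two complexity-$k$ strings already has complexity about $2k$, sitting right at the threshold. Overcoming this balancing act --- choosing $x$ of complexity close to $k$ and then a $y$ for which neither $\C(y)$ nor the price of dodging the enumerable set $R_{x}$ exceeds $k+O(\log n)$ --- is where I expect the real work of the proof to lie.
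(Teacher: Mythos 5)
Your upper bound is correct and is essentially the paper's argument: both proofs observe that the relation $\{(u,v): g(u)=v \text{ and } f(v)=u\}$ is a computable partial injection containing $(x,y)$, extend it to a computable permutation, and pay only the $O(\log n)$ cost of self-delimiting concatenation of the two total programs. (The paper restricts to strings of length at most $n$ and extends a finite matching; your back-and-forth over all of $\mathbb{B}^*$ works equally well since membership in the domain and range of $M$ is decidable from the total programs $p,q$.)

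The lower bound, however, is a genuine gap: you correctly diagnose why the naive attempts fail --- the set $R_x$ of images of $x$ under cheap bijections is too large and too hard to avoid cheaply, and there are too few strings of complexity $\le k$ for a static counting argument --- but you stop there, and that diagnosis is precisely the part the theorem claims to overcome. The missing idea is to construct $x$ and $y$ \emph{adaptively}, as a game against the enumeration of all programs of complexity below $2k-2$ that compute bijections of $\mathbb{B}^n$. Alice marks at most $2^k$ elements on each side, alternating between $X$ and $Y$; each time she picks a fresh element that is connected by the bijections listed so far to at most a quarter of the already-marked elements on the other side. Such an element exists by averaging: each marked element on the other side is connected to at most $2^{2k-2}$ of the $2^n>2^{2k}$ elements, so a random element is connected to a random marked element with probability at most $1/4$. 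Hence at least half of the $2^{2k}$ marked pairs each force Bob to add a distinct new bijection, which exceeds his budget of $2^{2k-2}$; so some marked pair $(x,y)$ is never connected. This construction simultaneously solves the problem you flagged of bounding $\C(x)$ and $\C(y)$: since there are at most $2^k$ marked elements on each side and the whole process is computable from $n$ and $k$, each of $x$ and $y$ is determined by its $k$-bit ordinal number in the marking order, giving $\C(x),\C(y)\le k+O(\log n)$ --- the element is specified by \emph{when} it was marked, not by describing it relative to an enumerable exclusion set. Without some such adaptive construction, the counting framework you set up does not yield the claimed pair.
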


Note the difference with non-total result mentioned earlier: now instead of \emph{maximum} of $\C(x|y)$ and $\C(y|x)$ we need their \emph{sum}.

\begin{proof}
The first part is simple. Having two total programs $p$ (mapping $x$ to $y$) and $q$ (mapping $y$ to $x$) and knowing $n$, we compute a one-to-one correspondence between two sets of strings of length at most $n$: string $u$ corresponds to $v$ if $p(u)=v$ and $q(v)=u$ at the same time. (This correspondence can be effectively computed as a finite object, since both $p$ and $q$ are total according to our assumption.) Then we extend this correspondence to a permutation of the set of all strings of length at most $n$; one more extension gives a computable permutation of the set of all binary strings (we may assume, for example, that all longer strings are mapped to itself). The progam $t$ obtained in this way can be effectively constructed given $p$, $q$ and $n$, so we get the required bound. (Note that both $\CT(x|y)$ and $\CT(y|x)$ do not exceed $n$, therefore forming a pair from $p$ and $q$ can be done with $O(\log n)$-overhead.)

For the second part, we again consider a game. Let $X$ and $Y$ be sets that contain $2^n$ elements (recall that $n>2k$). Alice can \emph{mark} some elements in $X$ or $Y$, not more than $2^k$ elements in each set. Bob can list (sequentially) some bijections between $X$ and $Y$, at most $2^{2k-2}$ bijections. Winning condition: Bob wins if for every marked element $x\in X$ and for every marked element $y\in Y$ there exists a bijection in the list that maps $x$ to $y$. 

It is easy to see that Bob can win if $2^{2k-2}$ is replaced by $2^{2k}$: when Alice marks new elements, he forms a bijection for every new pair of marked elements, and adds all these bijections to the list; in total there are at most $2^k\cdot 2^k$ such pairs. But $2^{2k-2}$ bijections are not enough:

\begin{lemma}
Alice has a winning strategy in this game.
\end{lemma}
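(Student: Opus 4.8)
The plan is to let Alice fix her whole $X$-budget at once and then spend her $Y$-budget adaptively, forcing Bob to pay a large and \emph{fresh} toll for each element of $Y$ she marks. Concretely, on her first move Alice marks all $2^k$ elements of $X$; call this set $S$. For a current list $L$ of Bob's bijections and a string $y\in Y$, let $c(y)$ denote the number of $x\in S$ for which some bijection of $L$ already maps $x$ to $y$. The elementary but crucial observation is that a single bijection sends exactly one element to $y$, so in order to make a marked $y$ a losing target for Alice (i.e.\ to have every $x\in S$ mapped to $y$) Bob must supply $2^k-c(y)$ further bijections, all \emph{pairwise distinct} and all added \emph{after} the moment $c(y)$ was measured.

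Alice's adaptive rule is: always keep one ``current target'' $y\in Y$ with $c(y)$ as small as possible, and refuse to mark a new target until Bob has completely covered the current one. First I would check the averaging estimate that makes cheap targets available. Since $\sum_{y\in Y}c(y)=\sum_{x\in S}|\{\sigma(x):\sigma\in L\}|\le |S|\cdot|L|\le 2^{k}\cdot 2^{2k-2}=2^{3k-2}$, while the number of unmarked targets is at least $2^n-2^k\ge 2^{2k}$ (here, and essentially only here, the hypothesis $n>2k$ is used), the minimum of $c$ over unmarked targets never exceeds $2^{k-2}$. Hence Alice can always pick a fresh target $y$ with $c(y)\le 2^{k-2}$, and by the observation above Bob must then spend at least $2^k-2^{k-2}=3\cdot 2^{k-2}$ new bijections to cover it.

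The ``wait until covered'' discipline is what turns these per-target tolls into an additive lower bound: the bijections that cover successive targets are committed in disjoint time intervals, so they are genuinely distinct across targets. Therefore, if Bob manages to cover $t$ of Alice's targets, he has used at least $3t\cdot 2^{k-2}$ bijections, whence $t\le 2^{2k-2}/(3\cdot 2^{k-2})=2^k/3<2^k$. Since Alice's $Y$-budget is $2^k$, strictly larger than $2^k/3$, she can always mark one more target than Bob can afford to finish; that last target keeps some pair $(x^*,y^*)$ with $x^*\in S$ permanently uncovered, and Alice wins. The game is infinite, but this is a win in the limit: if at any stage Bob simply stops covering the current target, that target is already a win for Alice.

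The main obstacle is the bookkeeping that makes the tolls add up rather than overlap, i.e.\ justifying that the bijections answering different targets are distinct; this is exactly why Alice waits for full coverage before moving on, so that the ``fresh'' bijections for the new target are all committed strictly later. A secondary point to get right is that cheap targets (those with $c(y)\le 2^{k-2}$) remain available throughout the play; this rests on the averaging inequality above and is precisely the place where the gap between Bob's allowance $2^{2k-2}$ and the naive sufficient bound $2^{2k}$~--- equivalently the assumption $n>2k$~--- is consumed.
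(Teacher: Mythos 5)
Your proof is correct, and it rests on the same two pillars as the paper's: an averaging argument over the large side (this is where $n>2k$ is spent) to find a target connected to few already-marked elements, and the ``wait until the current target is fully covered'' discipline, which forces the bijections answering successive targets to lie in disjoint time intervals and, within one interval, to be pairwise distinct because they all send different elements to the \emph{same} newly marked vertex. The one structural difference is that you commit the entire $X$-budget upfront and adapt only on the $Y$ side, whereas the paper alternates between $X$ and $Y$, choosing each new element (on either side) to be connected to at most a quarter of the marked elements opposite it; the numbers come out essentially the same ($3\cdot 2^{k-2}$ fresh bijections per covered target versus ``at least half of the $2^{2k}$ pairs are new''). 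Your one-sided version arguably has lighter bookkeeping, since the element shared by all pairs of a given toll is always the current $Y$-target; the paper's alternating version is what generalizes when one wants symmetry between the two sides, but for this statement either works.
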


Let us explain why this is enough to prove the theorem. Let $X=Y=\mathbb{B}^n$ (the set of $n$-bit strings). Let Alice play against Bob who generates all programs of complexity less than $2k-2$ and runs them (in parallel) on all elements of $X$; when he finds that some program computes a bijection between $X$ and $Y$, this bijection is added to the list. Since Alice wins, there are some marked elements $x$ and $y$ that are not connected by any bijection in the list. These elements are determined by $n$, $k$, and their ordinal number in the enumeration; the latter can be encoded by $k$ bits since there is at most $2^k$ marked elements in each set (so we get $O(\log n)+k$ bits in total). 

This argument assumes that Alice's strategy is computable given $n$ and $k$; as before, we may note that existence of some strategy implies the existence of a computable one, or look at the actual strategy below.

It remains to show a (computable) winning strategy for Alice. She starts by marking arbitrary elements $x_1\in X$ and $y_1\in Y$ and then waits until Bob provides a bijection that connects them.  After that, Alice chooses (again arbitrarily) some element $x_2\ne x_1$ and waits until $x_2$ is connected with $y_1$ (Bob needs a new bijection for that, since the old one connects $x_1$ and $y_1$). Then Alice switches to $Y$ and chooses a new element $y_2$ not connected to $x_1$, $x_2$ by existing bijections, and waits until Bob adds two new bijections connecting $y_2$ to $x_1$ and $x_2$. Then she continues in the same way, alternating between $X$ and~$Y$. At each step she takes an element not connected by existing bijections to existing elements on the other side. If Alice is able to continue this process, then for each new pair of marked elements a new bijection is needed, so the total number of bijections should be at least $2^{2k}$. 

Things are not so simple, however: it may happen that all elements of $X$ (or $Y$) are already connected to some marked elements\footnote{There are $2^{2k}$ bijections and $2^k$ marked elements, so at most $2^{3k}$ elements can be connected; we know only that $n$ is greater than $2k$, not $3k$.}, so Alice cannot choose $x\in X$ that is not connected to any marked element of $Y$ by any listed bijection. However, Alice can get at least half of new pairs each time.  Indeed, assume that she selects an element in $X$; let us show that she can select an element that is connected to less than half of marked elements in $Y$. Each marked element in $Y$ is connected to at most $2^{2k-2}$ elements in $X$, so the probability that a (uniformly) random element in $X$ is connected to random marked element in $Y$ is at most $1/4$. Therefore, for some element in $X$ only $1/4$ (or less) marked elements in $Y$ are connected to it, and Alice may choose this element. This argument saves at least half of the pairs, so the total number of bijections needed to cover all pairs is at least $2^{2k-1}$, more than Bob has. Lemma is proven.
\end{proof}

\section{Epstein--Levin theorem}
\label{epstein-levin}

In this section we discuss a game-theoretic interpretation of an important recent result of Epstein and Levin~\cite{epstein-levin}. This result can be considered as an extension of some previous observations made by Vereshchagin (see~\cite{stat}). Let us first recall some notions from the algorithmic information theory.

For a finite object $x$ one may consider two quantities. The first one, the \emph{complexity} of $x$, shows how many bits we need to describe $x$ (using an optimal description method). The second one, \emph{a priori probability} of $x$, measures how probable is the appearance of $x$ in a (universal) algorithmic random process. The first approach goes back to Kolmogorov while the second one was suggested earlier by Solomonoff.\footnote{Solomonoff also mentioned complexity as a technical tool somewhere in his paper.} The relation between these two notions in a most clean form was established by Levin and later by Chaitin (see~\cite{history} for more details). 

For that purpose Levin modified the notion of complexity and introduced \emph{prefix complexity} $\K(x)$ where programs (descriptions) satisfy an additional property: if $p$ is a program that outputs $x$, then every extension of $p$ (every string having prefix $p$) also outputs $x$. (Chaitin used another restriction: the set of programs should be prefix-free, i.e., none of the programs is a prefix of another one; though it is a significantly different restriction, it leads to the same notion of complexity up to $O(1)$ additive term.) 

The notion of a priori probability can be formally defined in the following way. Consider a randomized algorithm $M$ without input that outputs some natural number and stops. The output number depends on the internal random bits (fair coin tosses) by~$M$. For every $x$ there is some probability $m_x$ to get $x$ as output. The sum $\sum m_x$ does not exceed $1$; it can be less if the machine $M$ performs a non-terminating computation with positive probability. In this way every machine $M$ corresponds to some function $x\mapsto m_x$. There exists a \emph{universal} machine $M$ of this type, i.e., the machine for which function $x\mapsto m_x$ is maximal up to a constant factor. For example, $M$ can start by choosing a random machine in such a way that every choice has positive probability, and then simulate the chosen machine. We now fix some universal machine $M$ and call the probability $m_x$ to get $x$ on its output \emph{a priori probability} of $x$.

The relation between prefix complexity and a priory probability is quite close: Levin and Chaitin have shown that $\K(x)=-\log_2 m_x +O(1)$. However, the situation changes if we extend prefix complexity and a priori probability to sets. Let $X$ be a set of natural numbers. Then we can consider two quantities that measure the difficulty of a task ``produce some element of $X$'': 
\begin{itemize}
\item \emph{complexity} of $X$, defined as the minimal length of a program that produces some element in $X$;
\item \emph{a priori probability} of $X$, the probability to get some element of $X$ as an output of the universal machine $M$. 
\end{itemize}
As we have mentioned, for singletons the complexity coincides with the minus logarithm of a priori probability up to $O(1)$ additive term. For an arbitrary set of integers this is no more the case: complexity can differ significantly from the minus logarithm of a priori probability. In other words, for an arbitrary set $X$ the quantities
     $$
\max_{x\in X} m_x \qquad \text{and}\qquad \sum_{x\in X} m_x
     $$
(the first one corresponds to the complexity of $X$, the second one is a priori probability of $X$) could be very different. For example, if $X$ is the set of strings of length $n$ that have complexity close to $n$, the first quantity is rather small (since all $m_x$ are close to $2^{-n}$ by construction) while the second one is quite big (a string chosen randomly with respect to the uniform distribution on $n$-bit strings, has complexity close to $n$ with high probability).

Epstein--Levin theorem says that such a big difference is \emph{not} possible if the set $X$ is \emph{stochastic}. The notion of a stochastic object was introduced in the algorithmic statistics.  A finite object $X$ (in our case, a finite set of strings) is called \emph{stochastic} if, informally speaking, $X$ is a ``typical'' representative of some ``simple'' probability distribution. This means that there exist a probability distribution $P$ with finite domain (containing $X$) and rational probabilities such that (1)~$P$~has small complexity, and (2)~the randomness deficiency of $X$ with respect to $P$, defined as $-\log P(X)-\K(X|P)$, is small. (Note that here we speak about complexity of $X$ and $P$, where $X$ is a finite set of strings, and $P$ is a distribution on finite sets of strings. These notions are well defined, since the complexity of a finite object does not depend on the choice of its computable encoding, up to $O(1)$ additive term.) Here $\K(X|P)$ stands for \emph{conditional} prefix complexity of $X$ given $P$, see~\cite{shen-uppsala} for details.

\smallskip
Epstein--Levin theorem is essentially a result about some type of games (we call them \emph{Epstein--Levin games}). To define such a game, fix a finite bipartite graph $E\subset L\times R$ with left part $L$ and right part $R$. A probability distribution $P$ on $R$ with rational values is also fixed, as well as three parameters: some natural number $k$, some natural number $l$ and some positive rational number $\delta$.  After all these objects are fixed, we consider the following game.

Alice assigns some rational \emph{weights} to vertices in $L$. Initially all the weights are zeros, but Alice can increase them during the game. The total weight of $L$ (the sum of weights) should never exceed~$1$. Bob can \emph{mark} some vertices on the left and some vertices on the right. After a vertex is marked, it remains marked forever. The restrictions for Bob: he can mark at most $l$ vertices on the left, and the total $P$-probability of marked vertices on the right should be at most~$\delta$. The winner is determined as follows: Bob wins if every vertex $y$ on the right for which the (limit) total weight of all its $L$-neighbors exceeds $2^{-k}$, either is marked itself (at some point), or has a marked (at some point) neighbor.

Evidently, the task of Bob becomes harder if $l$ or $\delta$ decrease (he has less freedom in marking vertices), and becomes easier if $k$ decreases (he cares about less vertices). So the greater $k$ and the smaller $\delta$ is, the bigger $l$ is needed by Bob to win. The following lemma gives a bound (with some absolute constant in $O$-notation):

\begin{lemma}\label{lem:el}
For $l=O(2^{k}\log (1/\delta)$ Bob has a computable winning strategy in the described game.
\end{lemma}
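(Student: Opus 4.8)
The plan is to reduce the lemma to a greedy covering argument whose single step is powered by a simple averaging inequality, and then to adapt that offline argument to the online game. Call a right vertex $y$ \emph{heavy} (at a given moment) if the current total weight of its $L$-neighbours exceeds $2^{-k}$, and for a left vertex $x$ let $N(x)$ denote its set of right neighbours. The core observation is that if $U$ is any set of heavy vertices, then summing the defining inequality over $y\in U$ weighted by $P$ gives
$$
2^{-k}P(U)<\sum_{y\in U}P(y)\sum_{x\sim y}w(x)=\sum_{x}w(x)\,P(N(x)\cap U),
$$
and since $\sum_x w(x)\le 1$, some single left vertex $x$ satisfies $P(N(x)\cap U)>2^{-k}P(U)$. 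Marking this $x$ on the left puts a marked neighbour next to a set of still-uncovered heavy vertices whose $P$-mass exceeds a $2^{-k}$ fraction of $P(U)$.

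For fixed weights this already finishes the argument. Iterating the observation, Bob greedily marks on the left the vertex supplied by the inequality; each such mark shrinks the uncovered heavy mass by a factor $(1-2^{-k})$, so after $\lceil 2^{k}\ln(1/\delta)\rceil=O(2^{k}\log(1/\delta))$ left marks the uncovered heavy mass drops below $\delta$. Bob then marks the remaining heavy vertices directly on the right, spending total $P$-mass at most $\delta$. Thus the two budgets split cleanly: left marks absorb the bulk, right marks clean up the residue.

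The real work, and where I expect the main obstacle to lie, is making this robust in the online game. Two features help: on Bob's turn the weights are \emph{frozen}, so he can execute an entire greedy sweep in one move; and heaviness is \emph{monotone}, since weights never decrease, so once a vertex is covered it stays covered and once heavy it stays heavy. The genuine difficulty is that between sweeps Alice may raise weights and create new heavy vertices avoiding all of Bob's current left marks, and naively re-sweeping down to $\delta$ after every such ``refill'' can cost on the order of $2^{k}/\delta$ marks rather than $O(2^{k}\log(1/\delta))$. The resolution must play the two budgets against each other: the averaging step shows every left mark erases a constant fraction of the \emph{present} heavy mass (this is the source of the $\log(1/\delta)$ factor), while the constraint $\sum_x w(x)\le 1$ limits how much genuinely fresh support Alice can ever supply. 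The two extreme cases reconcile because a refill produced by adding weight to a single left vertex $x_{0}$ consists entirely of neighbours of $x_{0}$, and so is wiped out by the one mark $x_{0}$, whereas refills that force new marks must consume new weight. Charging each left mark either to multiplicative mass-progress or to the weight it removes from the unmarked side of $L$, and balancing these two charges against the total-weight bound, is what I expect to pin the total number of left marks at $O(2^{k}\log(1/\delta))$.

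Finally, Bob marks on the right exactly those heavy vertices that his left marks fail to cover; by the monotonicity of heaviness and the guarantee above, the limiting $P$-mass of this residue is at most $\delta$, so the right-hand budget is respected. Every step — testing heaviness, evaluating the quantities $P(N(x)\cap U)$, and selecting the averaging-optimal vertex — is a finite computation on the current rational data, so the described strategy is computable uniformly in $k$, $l$, $\delta$, and the graph, as required.
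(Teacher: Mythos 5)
Your offline argument is correct and nicely economical: the averaging inequality does show that for any fixed weighting and any set $U$ of heavy right vertices some left vertex $x$ has $P(N(x)\cap U)>2^{-k}P(U)$, so $\lceil 2^{k}\ln(1/\delta)\rceil$ greedy left marks reduce the uncovered heavy mass below $\delta$ when the weights are frozen once and for all. But the lemma is about the online game, and you yourself flag the decisive step --- bounding the \emph{total} number of left marks over all of Alice's refills --- and then leave it at ``the resolution must play the two budgets against each other'' and ``is what I expect to pin the total number of left marks.'' That step is not carried out, and the charging scheme you gesture at does not obviously close. The proposed charge ``each left mark either makes multiplicative mass-progress or consumes fresh weight'' founders on the fact that weight placed on a left vertex is shared among all of its right neighbours: Alice can spend $o(1)$ fresh weight, spread in small increments over many left vertices, and thereby create a batch of newly heavy right vertices none of which is covered by your existing marks and no single one of which dominates the batch. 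Each such batch again costs you up to $\Theta(2^{k}\log(1/\delta))$ greedy marks to drive below the tolerable residue (you cannot simply leave each batch's residue unmarked on the right, since the right-hand budget $\delta$ is a single budget for the whole game, not per batch), and nothing in the total-weight bound $\sum_x w(x)\le 1$ by itself limits the number of such batches to $O(1)$. So the claimed bound on left marks, and hence the final claim that the limiting uncovered residue has $P$-mass at most $\delta$, rests on an argument that is missing.

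For comparison, the paper avoids this online bookkeeping entirely by a different device: Bob plays a \emph{probabilistic} strategy, marking a left vertex with probability $c2^{k}\varepsilon$ whenever Alice adds $\varepsilon$ to it (and marking on the right any heavy vertex left uncovered). A martingale-type estimate shows each heavy right vertex survives unmarked-on-the-left with probability at most $e^{-c}$, so with $c=\ln(1/\delta)+O(1)$ both the expected right-mass and the expected number of left marks are within budget with probability more than $1/2$ each; since the game is (after discretization) a finite game of perfect information, determinacy converts the positive winning probability into a deterministic winning strategy, and computability follows from finiteness. If you want to keep a direct greedy construction you would need to supply the missing amortized bound (e.g.\ via an explicit potential function combining the uncovered heavy mass with the remaining unmarked weight); as written, the proof has a genuine gap at exactly the point you identify as the main obstacle.
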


Before proving this lemma, let us explain the connection between this game and the statement of Epstein--Levin theorem. Vertices in $R$ are finite sets of integers; vertices in $L$ are integers, and the edges correspond to $\in$-relation. Alice's weights are a priori probabilities of integers (more precisely, increasing approximations to them). The distribution $P$ on $R$ is a simple distribution (on a finite family $R$ of finite sets) that is assumed to make $X$ (from Levin--Epstein theorem) stochastic. Bob may mark $X$, but this would make it non-random with respect to $P$ (marked vertices form a $P$-small subset and therefore all have big randomness deficiency), so Epstein and Levin do not need to care about $X$ any more. If $X$ is not marked and has big total weight (= the total a priori probability), $X$ is guaranteed to have a marked neighbor. This means that some element of $X$ is marked and therefore has small complexity (since there are only few marked elements); this is what Epstein--Levin theorem says. (Of course, one needs to use some specific bounds instead of ``small'' and ``large'' etc., we provide the exact statements after the proof of the lemma.)

\begin{proof}
To prove the existence of a winning strategy for Bob, we use the following (quite unusual) type of argument: we exhibit a simple \emph{probabilistic} strategy for Bob that guarantees some positive probability of winning against any strategy of Alice. Since the game is essentially a finite game with full information (see the comments at the end of the proof about how to make it really finite), either Alice or Bob have a winning strategy. And if Alice had one, no probabilistic strategy for Bob could have a positive probability of winning.

Let us describe this strategy for Bob. It is rather simple: if Alice increases weight of some vertex $x$ in $L$ by an additional $\varepsilon>0$, Bob responds by tossing a coin and marking $x$ with probability $c2^k\varepsilon$, while $c>1$ is some constant to be chosen later.  We need also to specify what Bob does if $c2^k\varepsilon>1$ (this always happens if $\varepsilon$ is $2^{-k}$ or more). In this case Bob marks $x$ for sure. Note also that without loss of generality we may assume that Alice increases weights one at a time, since we can split her move into a sequence of moves.

We have explained how Bob marks $L$-vertices; if at some point this does not help for some $R$-vertex, i.e., this vertex has total weight at least $2^{-k}$ but no marked neighbors, Bob immediately marks this $R$-vertex (as well as all other vertices with this property).

The probabilistic strategy for Bob is described, and we need to consider some (deterministic) strategy~$\alpha$ for Alice and show that the probability of winning the game for Bob (for suitable $c$, see below about the choice of $c$) is positive when playing against~$\alpha$. By construction, there are two reasons why Bob could lose the game:
\begin{itemize}
\item the total measure of marked $R$-vertices exceeds $\delta$; 
\item the number of marked $L$-vertices exceeds $l$.
\end{itemize}
To show that with positive probability none of this events happen, we ensure that probability of each event is less than $1/2$. For that we show that the expected $P$-measure of marked $R$-vertices is less than $\delta/2$ and the expected number of marked $L$-vertices is less than $l/2$.

Let us fix some~$y$ and estimate the probability for $y$ to be marked by Bob (=~to have no marked neighbors when the sum of weights of $y$'s neighbors achieves $2^{-k}$). Assume that the weights of neighbors of $y$ were increased by $\varepsilon_1,\ldots,\varepsilon_u$ during the game, and now $\sum \varepsilon_i\ge2^{-k}$. After each increase the corresponding neighbor of $y$ was marked with probability $c2^k\varepsilon_i$, so the probability that all the neighbors remain not marked, does not exceed
    $$
(1-c2^k\varepsilon_1)\cdot\ldots\cdot (1-c2^k\varepsilon_u)\le e^{-c2^k(\varepsilon_1+\ldots+\varepsilon_u)}\le e^{-c}
    $$
(recall that $(1-t)\le e^{-t}$ and that $\sum\varepsilon_i\ge 2^{-k}$). Therefore for every measure $P$ the expected  $P$-measure of marked vertices on the right (the weighted average of numbers not exceeding $e^{-c}$) does not exceed $e^{-c}$. So it is enough to let $c$ be $\ln (1/\delta)+O(1)$.

In fact, this picture is oversimplified: the estimate for probability should be done more carefully, since the values of $\varepsilon_1,\ldots,\varepsilon_u$ may depend on Bob's moves. The situation can be described as follows: our opponent (following some probabilistic strategy) tells us some numbers in $[0,1]$ (one by one). After the opponent names some $\varepsilon$, we perform random coin tossing with probability of success $\varepsilon$. Then for every $t$ the probability of the event ``at the moment when the sum of numbers exceeds $t$, we still have no successful trials'' does not exceed $e^{-t}$.  (To prove this statement formally, we need a backward induction in the tree of possibilities.)

The expected number of marked $L$-vertices can be estimated in the same way. Here the opponent also gives us some numbers whose sum is guaranteed not to exceed some $t$ ($t=c2^k$ in our case), and we use them as probabilities of success for random coin tosses. Similar argument shows that the expected number of successes does not exceed $t$. We need $t=c2^k <l/2$, so we take $l=c2^{k+2}=2^{k+2}(\ln (1/\delta)+O(1)) = O(2^k\log(1/\delta))$.

To finish the proof of the lemma, one last remark is needed. To make our arguments (a transition from a probabilistic strategy to a deterministic one) correct, we need to make the game finite. One may assume that current weights of vertices on the left all have the form $2^{-m}$ for some integer $m$ (replacing weights by approximations from below, we can compensate for an additional factor of $2$ by changing $k$ by $1$). Still the game is not finite, since Alice can start with very small weights. However, this is not important: the graph is finite, and all very small weights can be replaced by some $2^{-m}$. If $2^{-m}\cdot\#L<1$, then the sum of weights still does not exceed $2$, and this again is a constant factor.

\end{proof}

Now we can apply this Lemma to prove Epstein--Levin theorem. Let us first give exact definitions. A finite object $X$ is called \emph{$\alpha$-$\beta$-stochastic} if there exists a finite probability distribution $P$ (with finite support and rational values, so it is a finite object) such that
\begin{itemize}
\item $\K(P)$ does not exceed $\alpha$;
\item the deficiency $d(X|P)$, defined as $-\log P(X)-\K(X|P)$, does not exceed $\beta$.
\end{itemize}

\begin{theorem}[Epstein--Levin]
If a finite set $X$ is $\alpha$-$\beta$-stochastic, and its total a priori probability $\sum_{x\in X} m_x$ exceeds $2^{-k}$, then $X$ contains some element $x$ such that
     $$
\K(x)\le k+\K(k)+\log K(k)+\alpha+O(\log\beta)+O(1).
     $$
\end{theorem}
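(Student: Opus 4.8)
The plan is to feed the stochasticity witness $P$ into the Epstein--Levin game of Lemma~\ref{lem:el} and to extract the desired element from Bob's winning strategy. First I would build the graph from $P$: let $R$ be the (finite) support of $P$, let $L$ be the set of all integers that occur in the sets belonging to $R$, and join $y\in L$ to $S\in R$ exactly when $y\in S$; keep $P$ as the distribution on $R$. I take $k$ from the hypothesis and leave $\delta=2^{-d}$ to be fixed later, so Lemma~\ref{lem:el} hands me a computable winning strategy for Bob with $l=O(2^{k}\log(1/\delta))=O(2^{k}d)$. I let Alice play against it by raising the weight of each $y\in L$ to successive rational lower approximations of the a priori probability $m_y$; this respects the total-weight bound since $\sum_y m_y\le1$, and it is lower semicomputable. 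The whole play is then an algorithmic process determined by $(P,k,d)$, so the set $M_L$ of marked left vertices and the set $M_R$ of marked right vertices are enumerable given $(P,k,d)$, with $|M_L|\le l$ and $P(M_R)\le\delta$.

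Because $\sum_{y\in X}m_y>2^{-k}$, in the limit the total weight of the neighbours of the vertex $X\in R$ exceeds $2^{-k}$ (note $X\in R$, since $d(X|P)\le\beta$ forces $P(X)>0$). Hence Bob's victory gives a dichotomy: either $X\in M_R$, or $X$ has a neighbour in $M_L$. I would rule out the first branch. If $X\in M_R$, the rescaled semimeasure $S\mapsto 2^{d}P(S)\,[S\in M_R]$ is lower semicomputable given $(P,k,d)$ and has total mass at most $1$, so the conditional coding theorem yields $\K(X\mid P,k,d)\le-\log P(X)-d+O(1)$ and therefore $\K(X\mid P)\le-\log P(X)-d+\K(k,d\mid P)+O(1)$. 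Stochasticity gives $\K(X\mid P)\ge-\log P(X)-\beta$, so comparing the two yields $d\le\beta+\K(k,d\mid P)+O(1)$. Choosing $d$ slightly above $\beta+\K(k)$ (with a logarithmic correction that dominates $\K(k,d\mid P)+O(1)$, which grows only logarithmically in $d$) therefore makes the first branch contradictory.

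With $X\notin M_R$, some $x\in X\subseteq L$ lies in $M_L$. Since $M_L$ is enumerable given $(P,k,d)$ and has at most $l$ elements, $x$ is pinned down by its index there, a field of $\lceil\log l\rceil=k+\log d+O(1)$ bits (its length is known once $l$, hence $(k,d)$, is known). Prefixing self-delimiting descriptions of $(k,d)$ and of $P$ gives $\K(x)\le k+\log d+\K(k,d)+\alpha+O(1)$. With $d=\Theta(\beta+\K(k))$ one has $\log d=\log(\beta+\K(k))\le\log\K(k)+O(\log\beta)$, so the index field already carries the $\log\K(k)$ of the theorem; it then remains to keep $\K(k,d)$ down to $\K(k)+O(\log\beta)$ rather than $\K(k)+\K(d)$. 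The trick I would use is to describe $k$ by a shortest prefix program $p$ for it: the decoder recovers both $k$ and $\K(k)=|p|$ from $p$, and then computes the intended $d$ (a fixed function of $\K(k)$ and $\beta$) from a cheap $O(\log\beta)$-bit description of $\beta$, so $\K(k,d)\le\K(k)+O(\log\beta)+O(1)$. Substituting gives exactly
$$\K(x)\le k+\K(k)+\log\K(k)+\alpha+O(\log\beta)+O(1).$$

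The game-theoretic core is immediate from Lemma~\ref{lem:el}; the real work is the balancing act of the last paragraph. The scale $\delta$ must be small enough that the ``$X$ marked'' branch contradicts stochasticity, which forces $\log(1/\delta)\approx\beta+\K(k)$, yet every bit invested in $\delta$ resurfaces both inside $\log l$ and in the description of the game parameters. The obstacle I expect is precisely avoiding a \emph{second} $\log\K(k)$: a naive separate encoding of $\delta$ would cost $\K(\delta)=\Theta(\log\K(k))$ on top of the $\log d$ already hidden in $\log l$, doubling the term. Making the shortest-program encoding of $k$ do double duty (its length supplies $\K(k)$, from which $d$ is recomputed) is what collapses the two charges into one, and verifying that all the chain-rule and coding-theorem steps stay genuinely prefix-free with $O(1)$ constants is where the care must go.
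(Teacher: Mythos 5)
Your proposal is correct and follows essentially the same route as the paper: the same graph built from the support of $P$, the same choice $\delta=2^{-d}$ with $d=\Theta(\beta+\K(k))$ forced by the deficiency/semimeasure argument that rules out $X$ being marked, and the same final encoding of a marked left vertex by its index among the at most $O(2^k d)$ enumerable marked elements. Your trick of letting the shortest prefix program for $k$ supply $\K(k)$ for free when reconstructing $d$ is exactly the paper's use of $\K(k,\K(k))=\K(k)$, so the two arguments coincide in all essentials.
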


The sum $\sum_{x\in X} m_X$ can be called \emph{a priori} probability of the problem ``produce some element of $X$'', and $\min_{x\in X} \K(x)$ can be called prefix complexity of the same problem. The Epstein--Levin theorem guarantees that for $\alpha$-$\beta$-stochastic sets $X$ with small $\alpha$ and $\beta$ the prefix complexity is logarithmically close to the minus logarithm of a priori probability.

\begin{proof}
We follow the plan outlined above. Let $P$ be the finite probability distribution that makes $X$ stochastic. This means that $\K(P)\le \alpha$ and $d(X|P)=-\log P(X)-\K(X|P)\le\beta$. Consider Epstein--Levin game where $R$ is the support of $P$, the left-hand side  $L$ is the union of all sets in $R$ and edges connect each set $U\in R$ to all its elements. To describe the game completely, we need to specify parameters $k$, $l$, and $\delta$. The parameter $k$ is taken from the statement of our theorem; $\delta=2^{-d}$ where $d$ will be chosen later, and $l=O(2^k\log(1/\delta))=O(d2^k)$ is determined by $k$ and $d$ as described in Lemma~\ref{lem:el}. (This guarantees that Bob has a winning strategy in the game.) Then we let Bob play in this game against Alice who assigns (in the limit) weight $m_x$ to every element $x\in L$.

We will choose $d$ in such a way that all marked elements in $R$ have deficiency greater that $\beta$; our assumptions then guarantee that $X$ is not marked. Lemma~\ref{lem:el} then guarantees that $X$ has a marked neighbor, i.e., that some element of $X$ is marked. It remains to estimate the complexity of marked elements in $L$.

Why marked elements in $R$ have high deficiency? We know that the total measure of marked elements in $R$ does not exceed $2^{-d}$. Consider the semimeasure $P'$ that equals $2^d P$ on marked elements and $0$ otherwise; $P'$ can be enumerated if $P$, $d$, and $k$ are given, so
     $$\K(U|P,d,k)\le -\log P'(U)+ O(1)$$
for every $U$ in $R$. If $U$ is not marked, this is trivial (the right hand side is infinite); for marked $U$ we have
     $$\K(U|P,d,k)\le -\log P(U) - d + O(1)$$ 
and therefore
     $$\K(U|P)\le -\log P(U) - d + \K(d) + \K(k)+O(1),$$
so 
     $$d(U|P)\ge d - \K(d) - \K(k)-O(1)$$             
for all marked $U$ in $R$. So wee need the inequality 
     $$d - \K(d) - \K(k)-O(1)>\beta$$
to ensure that $X$ is not marked.  This is guaranteed for sure if 
     $$d = 2(\beta+\K(k))+O(1)$$
(we do not care about constant factor in $d$ since only $\log d$ will be used in the complexity bound below).   

After $d$ is chosen, we need to estimate the complexity of marked elements in $L$. They can be enumerated given $P$, $k$, $d$ and there is at most $O(2^k d)$ of them, so for every marked $x\in L$ we have
    $$\K(x|P,k,d)\le k+\log d+O(1)$$
and
    $$\K(x)\le \K(P)+\K(k,d)+k+\log d+O(1).$$
Recalling that $\K(P)\le \alpha$ and $d=2(\beta+\K(k))+O(1)$, we get
\begin{multline*}
    $$\K(x)\le \alpha + \K(k,\K(k),\beta)+k+\log \beta + \log \K(k)+O(1)\le\\\le \alpha+\K(k,\K(k))+\K(\beta)+k+\log\beta+\log\K(k)+O(1);
\end{multline*}    
it remains to note that $\K(k,\K(k))=\K(k)$ and that $\K(\beta)=O(\log \beta)$.
\end{proof}

\section{Information distance}\label{sec:mv}

Consider the following problem. Let $m$ be some constant. Given a string $x_0$ and integer $n$, we want to find strings $x_1,\ldots,x_m$ such that $\C(x_i|x_j)=n+O(1)$ for all pairs of different $i,j$ in the range $0,\ldots,m$. (Note that both $i$ and $j$ can be equal to $0$). This is possible only if $x_0$ has high enough complexity, at least~$n$, since $\C(x_0|x_j)$ is bounded by $\C(x_0)$. It turns out that such $x_1,\ldots,x_m$ indeed exist if $C(x_0)$ is high enough (though the required complexity of $x_0$ is greater than $n$), and the constant hidden in $O(1)$-notation does not depend on $n$ (but depends on $m$).

This statement is non-trivial even for $n=1$: it says that for every $n$ and for every string $x$ of high enough complexity there exists a string $y$ such that both $\C(x|y)$ and $\C(y|x)$ are equal to $n+O(1)$. This special case was considered in~\cite{vyugin}, the condition there is $\C(x_0)>2n$ (which is better than provided by our general result). Later~\cite{topology} a different technique (using some topological arguments) was used to improve this result and show that $\C(x_0)>n+O(\log n)$ is enough.

Here is the exact statement that specifies also the dependence of $O(1)$-constant on~$m$:

\begin{theorem}
For every $m$ and $n$ and for every binary string $x_0$ such that
    $$
\C(x_0)> n(m^2+m+1)+O(\log m)
    $$
there exist strings $x_1,\ldots,x_m$ such that
     $$
n\le C(x_i|x_j) \le n+O(\log m)
     $$
for every two different $i,j\in\{0,\ldots,m\}$.
\end{theorem}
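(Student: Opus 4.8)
The plan is to reformulate the statement as a game between Alice (who, as before, enumerates a universal family of short programs, thereby certifying upper bounds on all the conditional complexities $\C(x_i|x_j)$) and Bob (who must construct the tuple $x_1,\dots,x_m$ together with lower-bound certificates). First I would fix $x_0$ of complexity $> n(m^2+m+1)+O(\log m)$ and ask Bob to output, at his own pace, a tuple $(x_1,\dots,x_m)$ of strings. The upper bound $\C(x_i|x_j)\le n+O(\log m)$ is the easy direction: it suffices to guarantee that each $x_i$ can be recovered from any $x_j$ using $n+O(\log m)$ bits, which we arrange by making the whole tuple an element of a list of size roughly $2^n$ that is computable from any single coordinate. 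The lower bound $\C(x_i|x_j)\ge n$ is where the real work lies, and this is what the game must enforce against Alice's attempts to produce short programs.

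The key combinatorial idea I would use is a \emph{mutual genericity / covering} argument. For each ordered pair $(i,j)$ with $i\neq j$, Alice's threat is a partial function (program) of length $<n$ mapping $x_j$ to $x_i$; there are fewer than $2^n$ such candidate programs. I would have Bob maintain, for the current candidate tuple, the set of pairs $(x_i,x_j)$ that have already been ``hit'' by some short program, and whenever $x_i$ becomes computable from $x_j$ with fewer than $n$ bits, Bob \emph{refreshes} the offending coordinate — replacing $x_i$ by a new value not in the image of any of the currently enumerated short programs applied to the other coordinates. Because each of the $m(m+1)$ ordered pairs can be spoiled at most $2^n$ times, and each refresh of a coordinate costs us one ``fresh'' value chosen to avoid all $(m) \cdot 2^n$ forbidden images, the total number of candidate tuples Bob must burn through is bounded by roughly $2^{n(m^2+m)}$. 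Encoding the eventual winning tuple relative to $x_0$ therefore costs about $n(m^2+m)$ bits, which is exactly why the hypothesis demands $\C(x_0)>n(m^2+m+1)+O(\log m)$: the extra $n$ is the budget that lets the final coordinates themselves carry complexity $\ge n$ above $x_0$, and the $O(\log m)$ absorbs the bookkeeping of which of the $m(m+1)$ counters triggered each refresh.

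The main obstacle I anticipate is the interdependence of the refreshes: changing $x_i$ to kill the pair $(i,j)$ may simultaneously create a short program for some other pair $(i,j')$ or $(i',i)$, so a naive per-pair counting can loop. To handle this I would impose an ordering on the coordinates and a potential-function argument: when coordinate $i$ is refreshed I only guard against programs \emph{currently present} in Alice's enumeration, and I show that once all the guards for a fixed coordinate stabilize (which must happen, since Alice makes finitely many effective moves before the relevant short programs are all listed), that coordinate never needs refreshing again. Processing coordinates in a fixed priority order, one shows by induction that the total number of refreshes across all coordinates is bounded by the product $\prod$ of the per-pair bounds, giving the stated $n(m^2+m)$ exponent. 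This is the step that mirrors the ``assistants and counters'' bookkeeping of Lemma~\ref{winning} and the finite-game reasoning of Lemma~\ref{lemma:CT-C}, and it is where the dependence of the $O(\log m)$ constant on $m$ (but not on $n$) is established.

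Finally, I would convert the winning strategy back to a complexity statement exactly as in the earlier sections: Alice plays the computable strategy of enumerating all short programs, Bob plays his computable winning strategy, so the resulting tuple $(x_1,\dots,x_m)$ is computable from $x_0$ together with the index of the winning stage. Since that index lies in a set of size $2^{n(m^2+m)+O(\log m)}$, we get $\C(x_1,\dots,x_m \mid x_0)\le n(m^2+m)+O(\log m)$, and combined with the complexity lower bound on $x_0$ this forces the $x_i$ to have the required relative complexities; the upper and lower bounds on each $\C(x_i|x_j)$ then read off directly from the game's winning condition.
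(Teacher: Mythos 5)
Your high-level framing (a game against an enumerator of short programs, with the complexity hypothesis on $x_0$ used to keep $x_0$ out of a small ``bad'' set) points in the right direction, but the core of your construction fails, and the missing piece is exactly the main idea of the paper's proof. The problem is the counting step ``each of the $m(m+1)$ ordered pairs can be spoiled at most $2^n$ times.'' That bound is valid only in the direction where the coordinate you refresh is the \emph{output} of the offending short program: for a fixed condition $x_j$ there are indeed fewer than $2^n$ strings $x_i$ with $\C(x_i|x_j)<n$, so choosing a fresh $x_i$ outside the current images works. But for the pairs in which the coordinate you are allowed to change is the \emph{input} --- most glaringly the requirement $\C(x_0|x_j)\ge n$, where $x_0$ is fixed and only $x_j$ can be refreshed --- the new $x_j$ would have to avoid the \emph{preimage} of $x_0$ under all short programs, and a single program of length $<n$ can map unboundedly many strings to $x_0$ (e.g., a program extracting a prefix). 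So one short program can force arbitrarily many refreshes of $x_j$; your per-pair counters never saturate, the claimed stabilization of the ``guards'' does not happen (the enumeration of converging values $p(y)$ over all $y$ is infinite even though there are only $<2^n$ programs), and the bound $2^{n(m^2+m)}$ on the number of candidate tuples evaporates. The paper circumvents precisely this: it does not serve a fixed $x_0$ individually but runs a global ``marriage agency'' over all of $X_0$ at once, forming cliques only among elements whose ``experience'' indices (counts of complaints sent and received, per ordered pair of components) match. That matching guarantees that an element which has been rejected many times is re-paired only with partners who have already exhausted their budget of fewer than $2^n$ outgoing complaints, which bounds the number of rejections without ever bounding the set of potential rejecters. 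The price is a small set of ``hopeless'' elements of $X_0$; the hypothesis $\C(x_0)>n(m^2+m+1)+O(\log m)$ is used solely to show $x_0$ is not among them, because the hopeless elements are enumerable and number at most $m2^{n+1+nm(m+1)}$.

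A second, related defect is your closing inference from ``$\C(x_1,\dots,x_m\mid x_0)\le n(m^2+m)+O(\log m)$ combined with the complexity lower bound on $x_0$.'' Computability of the tuple from $x_0$ plus a short index yields \emph{upper} bounds on $\C(x_i|x_0)$ and says nothing about the lower bounds $\C(x_i|x_j)\ge n$, in particular nothing about $\C(x_0|x_j)\ge n$ (one of your $x_j$ could simply contain $x_0$ as a substring). The lower bounds must be read off from the final game position (no short program maps $x_j$ to $x_i$), and guaranteeing that such a position is reachable for your particular $x_0$ is exactly the difficulty described above.
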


Note that the high precision is what makes this theorem non-trivial (if an additional term $O(\log  C(x_0))$ were allowed, one could take the shortest program for $x_0$ and replace first $n$ bits in it by $m$ independent random strings).

\begin{proof}
Let us explain the game that corresponds to this statement. It is played on graph with $(m+1)$ parts $X_0,\ldots,X_m$. There are countably many vertices in each part $X_i$ (representing possible values of $x_i$); we will assume that all $X_i$ are disjoint copies of the set $\mathbb{B}^*$ of all binary strings. As usual, there are two players: Alice and Bob. Alice may connect vertices from different parts by \emph{undirected} edges, while Bob can connect them by \emph{directed} edges. Alice and Bob make alternating moves; at each move they can add any finite set of edges. Alice can also \emph{mark} vertices $x_0$ in $X_0$. The restrictions are:
\begin{itemize}
\item Alice may mark at most $m2^{n+1+nm(m+1)}$ vertices (in $X_0$);
\item for each vertex $x_i\in X_i$ and for each $j\ne i$,  Alice may have at most $m(m+1)2^n$ undirected edges connecting $x_i$ with vertices in $X_j$;
\item for each vertex $x_i\in X_i$ and for each $j\ne i$, Bob should have less than $2^n$ \emph{outgoing} edges from $x_i$ to vertices in $X_j$. (Note that the number of \emph{incoming} edges is not bounded.)
\end{itemize}

The game is infinite. Alice wins if (in the limit) for every non-marked vertex $x_0\in X_0$ there exist vertices $x_1,\ldots,x_m$ from $X_1,\ldots,X_m$ such that every two vertices $x_i,x_j$ (where $i\ne j$) are connected by an undirected (Alice's) edge, but not connected by a directed (Bob's) edge.
\begin{lemma}\label{lem:mv}
   Alice has a computable winning strategy in this game.
\end{lemma}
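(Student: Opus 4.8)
The plan is to prove the lemma as a purely combinatorial statement about the game, forgetting computability; the resulting strategy is automatically computable because every choice below is made by a finite search. First I recall what Alice must achieve and why the budgets are shaped as they are. An undirected edge between $x_i\in X_i$ and $x_j\in X_j$ is meant to certify the upper bound $\C(x_i|x_j)\le n+O(\log m)$: since each vertex has at most $m(m+1)2^n$ undirected neighbours in any fixed part, $x_i$ is recovered from $x_j$ by its index among these neighbours, costing $n+\log(m(m+1))$ bits. The absence of a directed edge $x_j\to x_i$ certifies the lower bound $\C(x_i|x_j)\ge n$, because in the application Bob draws $x_j\to x_i$ exactly for the fewer-than-$2^n$ strings $x_i$ with $\C(x_i|x_j)<n$. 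Thus Alice's sole combinatorial task is: for all but a bounded number of clients $x_0$, produce a tuple $(x_1,\dots,x_m)$ that is a clique in her undirected graph and an independent set in Bob's directed graph.

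The first thing to notice is that \emph{fresh} candidates are useless. If Alice completes $x_0$ using vertices $x_1,\dots,x_m$ that carry no edges yet, Bob destroys the clique with a single directed edge between two of them (a fresh vertex has its full out-budget available), and he can repeat this for every attempt, forcing Alice to mark every client. Hence the whole point of the strategy is \emph{reuse}: Alice maintains growing pools of candidate vertices in $X_1,\dots,X_m$ and connects many different clients to overlapping completions. The effect of sharing is the following key observation. If a candidate $v$ is offered, via undirected edges, to more than $2^n$ clients as the $X_i$-entry of their clique, then Bob cannot sever $v$ from all of them using out-edges of $v$, since $v$ has fewer than $2^n$ out-edges into any part; so for at least one of those clients the edge to $v$ can be killed only from the \emph{client} side, that is, by an out-edge of $x_0$ itself. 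This is what turns Bob's per-vertex bound into a global constraint.

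The core of the proof, and the main obstacle, is the counting that converts this local observation into a bound on the number of marked clients. I would run the following amortisation. Alice gives each client a budget of $2^n$ alternatives for each of the $m(m+1)$ ordered position-pairs, using exactly her degree allowance $m(m+1)2^n$; a client is marked only after \emph{all} of these alternatives have been destroyed by Bob. Each destruction is charged to a directed edge of Bob, and each such edge is either a client-side out-edge (at most $2^n-1$ per client, by Bob's bound) or a candidate-side out-edge. By the sharing observation the candidate-side edges are concentrated: a single candidate absorbs fewer than $2^n$ of them. Double counting the destroyed alternatives against Bob's out-edges — fewer than $2^n$ leaving each reused candidate, and fewer than $2^n$ leaving each client — shows that a fully blocked client forces Bob to have spent of order $2^{\,nm(m+1)}$ joint expenditures across its $m(m+1)$ position-pairs, while the client's own out-budget contributes one further factor $2^{n}$. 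Summed over marked clients this cannot exceed Bob's total admissible expenditure, and solving the resulting inequality yields at most $m\,2^{\,n+1+nm(m+1)}$ marked clients, exactly the marking budget. Making this bookkeeping precise — in particular keeping all $m(m+1)$ pair-constraints simultaneously under control while the pools grow — is the delicate step.

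Finally, the clique for general $m$ is handled by layering this one-pair mechanism over all $\binom{m+1}{2}$ pairs at once. I would build the completion coordinate by coordinate: having fixed partial candidates, Alice extends to the next part by an entry that is reused across many clients and whose directed edges to the already-chosen coordinates are still absent, discarding a whole partial tuple as soon as Bob blocks any one of its pairs. The independence of the pair-constraints — each governed by its own out-degree bound $<2^{n}$ — is what lets the per-pair factors multiply rather than interfere, producing the exponent $nm(m+1)$, while the extra $+n$ and the factor $m$ account for the client's own out-budget and for the $m$ coordinates. I expect the hard part to be exactly this simultaneous control: ensuring that forcing reuse for one coordinate does not exhaust Alice's degree allowance for another, so that every unmarked client retains enough live completions until Bob is provably out of out-edges.
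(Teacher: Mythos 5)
There is a genuine gap, and it sits exactly where you place it yourself: the ``delicate bookkeeping'' that you defer is the entire content of the lemma. Your double-counting sketch does not close for several concrete reasons. First, ``Bob's total admissible expenditure'' is not a finite quantity: his bound is per-vertex and per-part over an infinite vertex set, so the inequality ``sum over marked clients $\le$ total expenditure'' has no right-hand side unless Alice first confines herself to a finite candidate pool, and then the bound on marked clients becomes a bound in terms of the pool size, which is circular without a further idea. Second, the claimed origin of the exponent $nm(m+1)$ is not a mechanism that can be made to work: if each marked client really forced Bob to spend $2^{nm(m+1)}$ units out of a common budget, that factor would appear in the \emph{denominator} of the bound on marked clients, whereas in the theorem $2^{nm(m+1)}$ is a large \emph{allowance} that Alice is permitted to lose. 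Third, a clique attempt for a client $x_0$ can be destroyed by a directed edge $x_i\to x_j$ between two \emph{candidates} with $i,j\neq 0$; such a destruction is charged neither to the client's out-budget nor to a single shared candidate being severed from many clients, so neither of your two observations controls it.

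The paper's proof supplies the missing device: a synchronized ``experience'' (index) for every vertex, namely the tuple of $m(m+1)$ counters recording, for each ordered pair $(p,q)$, how many cliques containing that vertex were dissolved by a complaint of the $X_p$-member against the $X_q$-member. Cliques are only ever formed from free vertices with \emph{identical} index that have never complained about one another. Because the $X_p$-member of such a clique is the one who personally made all the $(p,q)$-complaints it has witnessed, every coordinate of every index stays below $2^n$; this simultaneously yields the degree bound $m(m+1)2^n$ (each dissolution increments some coordinate) and partitions all vertices into at most $2^{nm(m+1)}$ index classes. The synchronization forces the numbers of free vertices of each index to be equal across all components, and a counting argument then shows each class contributes at most $2m2^n$ hopeless clients: if $2m2^n$ matching free vertices were available in each component, a completion avoiding all prior complaints could be built coordinate by coordinate. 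Multiplying gives exactly the marking budget $m2^{n+1+nm(m+1)}$. Your proposal has the right intuitions (reuse, per-pair budgets, building the clique coordinatewise) but lacks this invariant, and without it the bound on the number of marked clients is not established.
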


It is easy to see how this lemma can be used to prove the statement. Imagine that Bob draws an edge $x_i\to x_j$ when he discovers that $\C(x_j|x_i)<n$. Then he never violates the restriction. Alice can computably win against this strategy; every marked vertex then has small complexity, since a marked vertex can be described by its ordinal number in the enumeration order. This ordinal number requires 
     $$\log (m2^{n+1+mn(m+1)}) = \log m + O(1) + n+m^2n +nm$$
bits, and to describe the game we need additional $O(\log n)+O(\log m)$ bits to specify $m$ and $n$, so we get $$C(x_0)\le n(1+m+m^2)+O(\log m)+O(\log n).$$ We want to conclude that $x_0$ is not marked (since it has high complexity), but the bound we have is slightly weaker than needed, it has additional term $O(\log n)$. To get rid of this term, we note that (for given $m$) the bounds for the number of marked vertices grow exponentially with $n$, so we can describe all marked vertices (for given $m$ and for all $n$) simultaneously, and the overhead in the complexity caused by marked vertices for smaller values of $n$ is bounded by~$O(1)$.

For every non-marked vertex $x_0$ there exist $x_1,\ldots,x_m$ that satisfy the winning conditions. For them $\C(x_j|x_i)\ge n$ (otherwise Bob would connect them by a directed edge), and $\C(x_j|x_i)\le n+O(\log m)$, since $x_j$ can be obtained from $x_i$ if we know $i$, $j$, and the ordinal number of undirected edge  $x_i$--$x_j$ among all the edges that connect $x_i$ to $X_j$, in the order of appearance of those edges in the game.

\smallskip
So it remains to prove the lemma. To make clear the idea of the proof, let us first consider the case $m=1$. In this case we deal with two countable sets $X_0$ and $X_1$, Alice's degree is bounded by $2^{n+1}$ and the total number of marked vertices should not exceed $2^{3n+1}$. To explain Alice's strategy, let us tell a story first. 

Imagine a ``marriage agency'' whose business is to form pairs $(x_0, x_1)$ of elements $x_0\in X_0$ and $x_1\in X_1$. After a pair is formed (or at some later moment), each of the ``partners'' (elements of the pair) may ``complain'' about the other one. Then the pair is dissolved and both elements become free. Later agency can try them with new partners.

The mission of the agency is to provide stable pairs for everybody or almost everybody. Of course, this is not always possible: imagine that some element complains about all partners. Moreover, even if additionally require that each element makes less than $M$ complaints, it may happen that for some $x$ all its partners complain about $x$ (still making less than $M$ complaints each), and the agency cannot do much for~$x$.	

\smallskip
However, by clever planning the agency can control the damage and ensure that
\begin{itemize}
\item agency makes at most $2M$ attempts to find a partner for any given element (never trying the same partnership twice);
\item all elements of $X_0$, except for at most $2M^3$ ``hopeless'' ones, ultimately get a stable partnership, and hopeless elements are explicitly marked.
\end{itemize}

(Note that the last requirement treats $X_0$ and $X_1$ in a non-symmetrical way.)

\smallskip
The agency can achieve its goals using the following strategy. First it chooses an arbitrary bijection between $X_0$ and $X_1$ and creates all corresponding pairs. Then it treats complaints one by one: if some $x_0$ complains about its current partner $x_1$ or vice versa, the pair $(x_0,x_1)$ is dissolved. Then agency tries to find a new partner for $x_0$ among elements of $X_1$ \emph{with matching experience}. 

The last requirement is the crucial point of our argument: it means that in the new pair \emph{the number of complaints made by one partner should be equal to the number of complaints received by the other one.} In this way an unlucky element who was rejected $M-1$ times will get a partner who made $M-1$ complaints and therefore is unable to complain again. So nobody will be rejected $M$ or more times. 

The bad news is that sometimes for an element $x_0$ from a dissolved pair there is no partner with matching experience; in this case $x_0$ is declared ``hopeless'' and never considered again. We should estimate the maximal number of hopeless elements. We can encode ``experience'' as a pair of two integers in range $[0,M)$, so there are at most $M^2$ possible values of this parameter, and hopeless elements can be divided into $M^2$ classes. Let us show that in each class there are at most $2M$ elements. Since elements in $X_0$ and $X_1$ change their experience simultaneously (when a complaint is made), and newly formed pairs are made of matching elements, free elements in $X_1$ also form $M^2$ classes of the same cardinalities. If there are already $2M$ hopeless elements in some class, there are also $2M$ matching free elements. New hopeless element in this class cannot appear since one of there matching free elements can be used to form a new pair. (Recall that each element can send less than $M$ complaints and receive less than $M$ complaints, so one of the $2M$ free elements of matching experience was not tried yet.)

One last remark about the agency's strategy: we started with making infinitely many pairs (using some bijection between $X_0$ and $X_1$) at once. It is not important, since actual implementation of this decision can be made gradually (we think about some pairs as existing, but they are not yet informed about that).

Now we explain how this story can be transformed into Alice's strategy in the game described. The parameter $M$ (bound for the number of complaints) is $2^n$; then $2M$ equals $2^{n+1}$ and $2M^3$ equals $2^{3n+1}$, as the lemma requires for $m=1$. When agency makes a pair, Alice draws an (undirected) edge between elements of the pair. When the pair is dissolved, an edge (of course) does not disappear, but Alice does not care about it any more, considering only ``active'' edges (that correspond to currently existing pairs). When Bob draws a (directed) edge  $x\to y$ that is parallel to one of the active edges (the undirected edge $x$--$y$), the agency sees that $x$ complains about $y$ (and, according to this complaint, dissolves the pair $x$--$y$). When Bob draws an edge that is not parallel to an active edge, this edge is ignored until parallel active edge appears (corresponding pair is established); then this old edge becomes a complaint and the newly formed pair is dissolved. (If Bob draws an edge that is parallel to an old inactive edge of Alice, this edge never will change anything.) Finally, agency's declaration that some $x_0\in X_0$ is hopeless means that Alice marks~$x_0$.

It is easy to see that the agency's behavior described above can be transformed into Alice's strategy, so Alice indeed has a (computable) winning strategy for the case $m=1$.

After these preparations let us consider the general case. The idea remains the same, but instead of two sets $X_0$ and $X_1$ we now have $m+1$ components $X_0,X_1,\dots,X_m$. Instead of pairs, we have now cliques made of $m+1$ elements, one per component. A participant of a clique may complain about some other participant, and in this case the clique is dissolved (and an attempt to create a new clique for the $X_0$-element of the dissolved one is performed~--- again $X_0$ gets a preferential treatment).

The clique is represented by Alice's edges between all its elements, $m(m+1)/2$ edges in total. A directed Bob's edge $x_i\to x_j$ that connects two elements $x_i$ and $x_j$ of one of the currently active cliques, is understood as a ``complaint'' of $x_i$ againts $x_j$. (Other edges created by Bob are delayed complaints, as before).

The important change is how the ``experience'' is defined. Each vertex remembers $m(m+1)$ non-negative integers corresponding to ordered pairs $(i,j)$. This tuple $I=\{I_{p,q}\}$  (where $p,q\in \{0,1,\ldots,m\}$ and $p\ne q$) is called an ``index'' of a vertex. When $x_i$ complains about $x_j$ (both are elements of the same clique $(x_0,\ldots,x_m)$), all participants of this clique note this and increase $(i,j)$-component of their index (initially filled with zeros) before the clique is dissolved. Note the difference: now each element $x_i$ knows not only how many complaints it made ($I_{ij}$ is the number of complaints about $X_j$-elements) or received ($I_{ji}$ is the number of complaints received from $X_j$-elements), but also the number of complaints between other components (where $x_i$ is only a witness).

After one elements of a clique complains about another one, all elements of the clique update their indices, and the clique is dissolved. To find the new clique for the element $x_0\in X$ from the dissolved clique, we search for free elements with the same index in all the components. Moreover, it is needed that these elements never have sent complaints about each other (but it is OK if some of them were in the same clique, later dissolved because of some other complaint). If this is possible, a new clique is formed; if not, $x_0$ becomes marked (``hopeless'') and other elements of the dissolved clique remain free (outside the cliques).

Since only elements with the same index are combined into cliques, and the indices are updated synchronously, the number of free elements (that do not belong to active clique) is the same for all components (in general and for each value of the index). Note also that all the numbers in the indices are less than $2^n$ (since each of them is a number of complaints sent by some $x_i$ to some $X_j$). When element changes the clique, its index increases along some coordinate, so the number of changes is at most $m(m+1)2^n$, and each change creates $m$ new edges adjacent to this element (one per component). So for every element $x_i$ and for each $j$ there are at most $m(m+1)2^n$ undirected edges that connect $x_i$ to vertices in $X_j$.

To finish the proof of Lemma~\ref{lem:mv}, it remains to prove the bound for the number of marked vertices (=~hopeless elements in $X_0$). For that we estimate the number of marked vertices of each index (recall that the number of possible indices is bounded by $2^{nm(m+1)}$ since its components are less than $2^n$). The idea here is simple: if we have many (at least $2m2^n$) free vertices of some index, we can always find a clique (made of them) for every vertex $x_0\in X_0$ of that index that lost its old clique. Indeed, we find clique elements sequentially in $X_1,\ldots, X_m$; at every step we can find a vertex that has no complaints about already selected vertices and vice versa, since the number of complaints in both directions is less than $2\cdot 2^n$ for each of the components (less than $2^n$ for each direction), and in total less than $2m2^n$ elements in the next component are unusable due to previous ones.
\end{proof}

\section*{Acknowledgments}

Authors are grateful to Leonid Levin, Peter G\'acs, Bruno Bauwens,  the participants of Kolmogorov seminar (Mos\-cow) and all their colleagues in LIRMM (Montpellier) and LIAFA (Paris); special thanks to Rupert H\"olzl for explaining Friedberg's argument.  Robert Soare informed us (at CiE2012, where the preliminary version of this paper~\cite{cie} was presented) about A.H.~Lachlan's paper~\cite{lachlan} where Lachlan initiated the game approach to computability theory (in slightly different context related to enumerable sets); see also~\cite{kummer-transactions}. Lance Fortnow showed us a proof of Friedberg theorem due to Kummer~\cite{kummer-friedberg}.


\begin{thebibliography}{9}

\bibitem{bauwens}
Bruno Bauwens, \emph{Computability in statistical hypotheses testing, and characterizations of independence and directed influences in time series using Kolmogorov complexity}. PhD thesis, Ugent, May 2010.

\bibitem{bglvz}
Charles H. Bennett, P\'eter G\'acs, Ming Li, Paul~M.B.~Vit\'anyi, Wojciech H. Zurek, Thermodynamics of computation and information distance, \emph{Proceedings of 25th ACM STOC}, p.~21--30 (1993). 

\bibitem{history}
Laurent Bienvenu, Alexander Shen, \emph{Algorithmic information theory and martingales},
preprint, arXiv:0906.2614v1 (2009).

\bibitem{epstein-levin}
Samuel Epstein, Leonid A.~Levin, \emph{Sets Have Simple Members},\\
preprint, arXiv:1107.1458v5 (2011).

\bibitem{friedberg}
R.~Friedberg, Three theorems on recursive numberings,
\emph{J. of Symbolic Logic}, \textbf{23}, 309--316 (1958).

\bibitem{kummer-friedberg}
Martin Kummer, An easy priority-free proof of a theorem of Friedberg, \emph{Theoretical Computer Science}, \textbf{74}, 249--251 (1990).

\bibitem{kummer-transactions}
Martin Kummer, The complexity of recursion theoretic games, \emph{Transactions of the American Mathematical Society}, \textbf{358}(1), 59--86 (2005)

\bibitem{lachlan}
A.H.~Lachlan,  On some games which are relevant to the theory of recursively enumerable sets, \emph{Annals of Mathematics}, \textbf{91}(2), 291--310 (1970)

\bibitem{muchnik-general}
Andrej A. Muchnik, On the basic structures of the descriptive theory of algorithms,
\emph{Soviet Math. Dokl.}, \textbf{32}, 671--674 (1985).

\bibitem{muchnik-game}
Andrej A.~Muchnik, Ilya Mezhirov, Alexander Shen, Nikolay Vereshchagin,
\emph{Game interpretation of Kolmogorov complexity}, preprint arXiv:1003.4712v1 (2010).

\bibitem{muchnik-ver}
Nikolay Vereshchagin, Kolmogorov complexity and Games,
\emph{Bulletin of the European Association for Theoretical Computer Science}, \textbf{94}, Feb.~2008, 51--83.

\bibitem{topology}
Andrei Romashchenko, Alexander Shen, Topological arguments for Kolmogorov complexity, \emph{Proceedings of AUTOMATA and JAC 2012 conference}, EPTCS, v.~90, p.~127--132 (2012). 

\bibitem{shen-uppsala}
Alexander Shen, \emph{Algorithmic Information Theory and Kolmogorov Complexity},
Technical Report, Uppsala University, TR2000-034\\ (www.it.uu.se/research/publications/reports/2000-034/).

\bibitem{cie}
Alexander Shen, Game arguments in computability theory and algorithmic information theory, 
In: Barry Cooper, Anuj Dawar, and Benedikt L\"owe, editors, \emph{Computability in Europe 2012 Proceedings}, volume 7318 of Lecture Notes in Computer Science, pages 655--666. Springer, 2012

\bibitem{stat}
Nikolai K.~Vereshchagin, Paul M.B.~Vitanyi, Rate Distortion and Denoising of Individual Data Using Kolmogorov Complexity,
\emph{IEEE Transactions on Information Theory}, \textbf{56}(7), 3438--3454 (July 2010).

\bibitem{vyugin}
Mikhail Vyugin, Information distance and conditional complexities, \emph{Theoretical Computer Science}, v.~271, no.~1--2, p.~145--150 (2002)


\end{thebibliography}
\end{document}